\newcommand{\bsa}{{\boldsymbol{a}}}
\newcommand{\bsc}{{\boldsymbol{c}}}
\newcommand{\bsk}{{\boldsymbol{k}}}
\newcommand{\bsx}{{\boldsymbol{x}}}
\newcommand{\bsy}{{\boldsymbol{y}}}
\newcommand{\bsz}{{\boldsymbol{z}}}
\newcommand{\bszero}{{\boldsymbol{0}}} 
\newcommand{\bsone}{{\boldsymbol{1}}}  
\newcommand{\rd}{{\mathrm{d}}}
\newcommand{\bbN}{{\mathbb{N}}}
\newcommand{\bbR}{{\mathbb{R}}}
\newcommand{\bbZ}{{\mathbb{Z}}}
\newcommand{\N}{{\mathbb{N}}} 
\newcommand{\Z}{{\mathbb{Z}}} 
\DeclareSymbolFont{bbold}{U}{bbold}{m}{n}
\DeclareSymbolFontAlphabet{\mathbbold}{bbold}
\begin{document}

\title*{Where are the logs?}
\author{Art B. Owen and Zexin Pan}
\institute{Art B. Owen \at Stanford University, \email{owen@stanford.edu}
\and Zexin Pan \at Stanford University \email{zep002@stanford.edu}}
%
%
\maketitle

\abstract{
The commonly quoted
error rates for  QMC integration with an infinite low discrepancy
sequence is $O(n^{-1}\log(n)^r)$ with $r=d$
for extensible sequences
and $r=d-1$ otherwise.
Such rates hold uniformly over all $d$ dimensional integrands
of Hardy-Krause variation one when using $n$
evaluation points.  Implicit in those
bounds is that for any sequence of QMC points,
the integrand can be chosen to depend on $n$.
In this paper we show that
rates with any $r<(d-1)/2$ can hold when $f$ is held fixed as $n\to\infty$.
This is accomplished following a suggestion of Erich
Novak to use some unpublished results of Trojan
from the 1980s as given in the information
based complexity monograph of Traub, Wasilkowski
and Wo\'zniakowski.
The proof is made by applying a technique of Roth
with the theorem of Trojan. The proof is non constructive
and we do not know of any integrand of bounded
variation in the sense of Hardy and Krause
for which the QMC error exceeds $(\log n)^{1+\epsilon}/n$
for infinitely many $n$ when using a digital
sequence such as one of Sobol's.
An empirical search when $d=2$ for integrands
designed to exploit known weaknesses in certain
point sets showed no evidence that $r>1$ is
needed. An example with $d=3$ and $n$
up to $2^{100}$ might possibly require $r>1$.
}

\section{Introduction}




In this article, we study the asymptotic
error rates for integration by quasi-Monte Carlo (QMC) as $n\to\infty$
while $f$ is fixed.
Most of the error upper bounds in QMC are based on fooling functions
$f_n$ that, given $n$ integration points, are poorly integrated.
By contrast, most of the
published empirical results follow the integration
error for a single integrand $f$ as $n$ increases.
The upper bounds have us play against an adaptive adversary choosing
an unfavorable $f_n$ at each sample size $n$
instead of keeping $f$  fixed as $n\to\infty$.
The error bounds, that we describe in more detail below,
are typically $O(\log(n)^r/n)$ where $r$ can be as large as the
dimension of the integrand's domain.
These bounds can be enormous and, to our knowledge, there has never
been an integrand exhibited where a standard QMC point set
is shown to need $r>1$.
That raises the question of whether $r>1$ is simply
a consequence of the adversarial formulation.
The alternative is that some function $f$ is a `persistent fooling function'
causing large errors for infinitely many $n$.
In an earlier version of this article we posed a question about whether
{\sl any} integrand of bounded variation in the sense of Hardy and Krause (BVHK)
on $[0,1]^d$ for any $d\geqslant1$
has an integration error above $c\log(n)^r/n$ for infinitely many $n$
with $r>1$ and $c>0$ using a digital sequence such as Sobol's for the integration points.
For background on bounded variation in the sense of Hardy and Krause
or in the sense of Vitali, we refer the reader to \cite{variation}.

We owe a great debt to Erich Novak who pointed us to some unpublished
work of Trojan described in detail in Chapter 10 of
the information based complexity monograph of Traub, Wasilkowski
and Wo\'zniakowsk \cite{trau:wasi:wozn:1988}.
Trojan's work is about very general problems of computing
linear operators on Banach spaces based on the values of $n\to\infty$ linear functionals.
He shows that the adversarial worst case convergence rate is also very nearly the attained
rate for some specific problem instances.
In the QMC context, that work pertains to a single integrand $f$
as the number $n$ of evaluation points diverges to infinity.
A consequence of that work is that for any infinite
sequence of integration points, there are indeed integrands
in BVHK$[0,1]^d$ with an absolute error larger than $c\log(n)^{(d-1)/2}/(n\log\log(n))$
infinitely often, for any $c>0$.
Furthermore, those integrands are present within a reproducing kernel Hilbert space (RKHS)
on a certain unanchored space.  They are dense in that space, though
this does not mean that the usual Gaussian processes on such spaces
give them positive measure.
We only get $r=(d-1)/2$ logarithmic factors instead of $d$ or $d-1$
of them.  The explanation is that we use an $L^2$ bound
just like Roth \cite{roth:1954} used in getting a lower bound on star discrepancy.
A different analysis might yield larger $r$.
The $\log\log(n)$ factor in the denominator
can be replaced by a sequence that diverges more slowly.

We have not been able to construct a function in BVHK$[0,1]^d$
that provably needs $r>1$ powers of $\log(n)$
for a Sobol' sequence \cite{sobo:1967:tran}
or the Halton \cite{halt:1960}
sequence, even when exploiting known weaknesses of commonly used QMC sequences.
So, we are left to wonder: where are the logs?

An outline of this paper is as follows.
Section~\ref{sec:back} presents some results from the QMC literature
and introduces notation on some QMC sequences.
Section~\ref{sec:proofofbound} proves our main result
described above on existence of persistent fooling functions.
Section~\ref{sec:d=1} looks at the case $d=1$
to exhibit some example functions requiring $r=1$
for the van der Corput sequence: $f(x)=1\{x<2/3\}$
and $f(x)=x$.
Section~\ref{sec:d=2} computes error for some $d=2$
dimensional problems. The Halton and Sobol'
points there are closely related to van der Corput
points, yet two dimensional generalizations of the
problematic integrands from Section~\ref{sec:d=1}
fail to show a need for $r>1$.
In fact some of the empirical results are more consistent
with an $O(1/n)$ error.
Section~\ref{sec:bigm} computes a local discrepancy
$\delta(\bsz$)
for  Sobol' nets with $d=2,3$ and $1\leqslant m\leqslant100$ where all components of $\bsz$ equal $2/3$
chosen because $2/3$ is difficult to approximate by dyadic rationals.
It also includes $d=4$ for $1\leqslant m\leqslant 50$.
The cases with $d>2$ are the closest we have found to needing $r>1$
but are inconclusive.
Section~\ref{sec:discussion} discusses these results.

\section{Background}\label{sec:back}

From the Koksma-Hlawka inequality \cite{hick:2014}
combined with convergence rates for the star discrepancy \cite{nied:1992},
we get the widely quoted convergence rates
for the error in quasi-Monte Carlo integration
of a function $f:[0,1]^d\to\bbR$.
An integrand $f$ of bounded variation in the
sense of Hardy and Krause, written $f\in\mathrm{BVHK}[0,1]^d$,
can be integrated with error $O(n^{-1}(\log n)^{d-1})$
using $n$ function evaluations.  If we must use the first $n$ points
of an infinite sequence, then the rate
$O(n^{-1}(\log n)^{d})$ is attainable.
This article is mostly about the infinite sequence version.
Both of these rates are often written $O(n^{-1+\epsilon})$
where $\epsilon$ can be any positive constant
but $\log(n)^d\gg n^\epsilon$ for many use cases of interest.

For high dimensional problems, such powers of
$\log(n)$ are enormous and then
there is genuine uncertainty about whether
$O(n^{-1}(\log n)^{d})$ is better than the
root mean squared error (RMSE) of $O(n^{-1/2})$ from plain
Monte Carlo (MC) at practically relevant  $n$.
These rates omit three implied constants: one in
the star discrepancy (see \cite{faur:lemi:2014} for information),
one in the total variation of $f$
and the third one is the standard deviation of $f$.
These unknown constants contribute to
uncertainty about the $n$ at which QMC would
outperform MC.
A further complication is that the Koksma-Hlawka
bound is for a worst case integrand.
The situation is quite different in Monte Carlo
(MC) where the rate $\sigma n^{-1/2}$ holds for all finite $n$ making
it simultaneously a guide to how accuracy progresses
for a single integrand of variance $\sigma^2$ and
the RMSE formula (upper and lower bound)
for all integrands of variance~$\sigma^2$.

That observed errors for realistic $n$ and large $d$
do not follow a trend like $\log(n)^d/n$
was reported by Schlier \cite{schl:2004} among others.
That work also found that the variance of $f(\bsx)$
was more useful than its total variation in explaining the empirical accuracy
of QMC integration on test functions, despite the fact
that proved theoretical bounds for QMC error use total
variation and variance does not require any of the smoothness that
QMC relies on.
Many papers include empirically estimated convergence
rates for individual $f$ found by fitting a regression model
for log error versus $\log(n)$. See for instance L'Ecuyer \cite{lecu:2018}.
We do not see results that look like a large power of $\log(n)$
is present.

This mismatch between empirical results and theoretical ones
is troubling.  Empirical results alone don't give enough confidence
that they will apply to future problems.  Similarly, bounds that are
favorable (but asymptotic) or unfavorable (but worst case) could
also fail to provide a reliable guide to attained accuracy.
This mismatch has brought practical difficulties.
For instance, the logarithmic powers
in the Koksma-Hlawka bound led Bratley, Fox and Niederreiter \cite{brat:fox:nied:1992}
to limit their software to $d\leqslant 12$.

For some randomizations of digital nets
the RMSE is $O(n^{-1/2})$ whenever $f\in L^2[0,1]^d$ \cite{snxs}
and is also $O(\log(n)^{(d-1)/2}/n)$ under further smoothness
conditions \cite{smoovar,localanti,yue:mao:1999}.
In such cases the large powers of $\log(n)$ are
subject to a simultaneous $O(n^{-1/2})$ bound that
limits how much worse randomized QMC can be compared to MC
for finite $n$.
It would be interesting to know whether something like that also
holds for plain QMC. Perhaps the coefficient of $\log(n)^r/n$ is ordinarily
very small, or the effect is only relevant for impractically large $n$ or
perhaps not even present for most commonly investigated integrands.
For a survey of randomized QMC see L'Ecuyer and Lemieux \cite{lecu:lemi:2002}.

We conclude this section by describing $(t,m,d)$-nets and $(t,d)$-sequences
using the formulation from Niederreiter \cite{nied:1987}.
Let $b\geqslant2$ be an integer.
For $\bsk = (k_1,\dots,k_d)\in\N_0^d$ and
$\bsc = (c_1,\dots,c_d)\in\Z_0^d$
with $0\leqslant c_j<b^{k_j}$ the half open hyper-rectangle
\begin{align}\label{eq:elemint}
E(\bsk,\bsc) = \prod_{j=1}^d\Bigl[\frac{c_j}{b^{k_j}}, \frac{c_j+1}{b^{k_j}}\Bigr)
\end{align}
is called an elementary interval in base $b$.
It has volume $b^{-|\bsk|}$ where $|\bsk|=\sum_{j=1}^dk_j$.
We define its indicator function as
$$
I_{\bsk,\bsc}(\bsx)
=I_{\bsk,\bsc}(\bsx;b)
= 1\{\bsx\in E(\bsk,\bsc)\}.
$$

For integers $m\geqslant t\geqslant0$, $b\geqslant2$, $n=b^m$ and $d\geqslant1$
the points $\bsx_0,\dots,\bsx_{n-1}\in[0,1]^d$ are
a $(t,m,d)$-net in base $b$ if
$$
\frac1n\sum_{i=0}^{n-1} I_{\bsk,\bsc}(\bsx_i) =
\int_{[0,1]^d}I_{\bsk,\bsc}(\bsx)\rd\bsx=b^{-|\bsk|}
$$
holds for all elementary intervals with $|\bsk|\leqslant m-t$.
Other things being equal, smaller $t$ are better and
$t=0$ is best, but the choices of $d$, $b$, and $m$
impose a lower bound on the possible $t$ which
may rule out $t=0$.
The minT project of \cite{schu:schm:2009}
tracks the best known values of $t$ as well as some
lower bounds on $t$.

In this paper we emphasize infinite sequences.
The sequence $\bsx_i\in[0,1]^d$ for integers $i\geqslant0$
is a $(t,d)$-sequence in base $b$ if
$\bsx_{rb^m},\dots,\bsx_{(r+1)b^m-1}$ is a $(t,m,d)$-net
in base $b$ for all $m\geqslant t$ and all integers $r\geqslant0$.
These are extensible $(t,m,d)$-nets in that
the first $b^\ell$ points of a $(t,d)$-sequence form a $(t,m+\ell,d)$-net
for any integer $\ell\geqslant1$.
The most used $(t,d)$-sequences are the
$(0,d)$-sequences in prime bases $p\geqslant d$
of Faure \cite{faur:1982} and the $(t,d)$-sequences
in base $2$ of Sobol' \cite{sobo:1967:tran}.

We will make special use of the
van der Corput sequences in base $b\geqslant2$.
These are $(0,1)$-sequences in base $b$.
If we write the natural number $i=\sum_{k=1}^\infty i_k b^{k-1}$
with digits $i_k\in\{0,1,\dots,b-1\}$, then the sum only has $K(i)<\infty$
nonzero terms and we then set
$x_i = \sum_{k=1}^{K(i)}b^{-k}i_k$.
This sequence has star discrepancy $D^*_n=O(\log(n)/n)$.
For $n=b^m$ it is a left endpoint rule containing points
$i/n$ for $0\leqslant i< n$ and so it has $D^*_n = 1/n$
by \cite[Theorem 2.6]{nied:1992}.
The original van der Corput sequence in base $b=2$
is from \cite{vand:1935:I}.


\section{Proof of the lower bound}\label{sec:proofofbound}

We begin with a general theorem on worst-case errors.
Later we specialize it to the QMC setting.

\begin{theorem}\label{thm:asymptoticrate}
Let $(F,\Vert\cdot\Vert)$ be a Banach space and $S$ be a linear functional on $F$. For a sequence of continuous linear functionals $L_n$ on $F$, define
\begin{align*}
N_n(f)&=(L_1(f),\ldots,L_n(f)),\quad\text{and}\\
r_n&=\sup\bigl\{|S(f)|\bigm| f\in F, N_n(f)=\bszero, \Vert f\Vert\leqslant 1\bigr\}.
\end{align*}
Then for any sequence of mappings $\phi_n$ from $\mathbb{R}^n$ to $\mathbb{R}$, there exists $f\in F$ such that
\begin{equation}\label{eqn:target}
    \limsup_{n\to \infty}\frac{ |S(f)-\phi_n(N_n(f))|}{(\log\log n)^{-1}r_n}=+\infty.
\end{equation}
\end{theorem}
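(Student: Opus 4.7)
The plan is to build a persistent fooling function $f$ as a randomized series $f(\varepsilon)=\sum_{k\geqslant 1}\varepsilon_k a_k g_k$, where $\varepsilon_k\in\{-1,+1\}$ are independent Rademacher signs, the weights $a_k>0$ are summable, and the $g_k$ are near-extremal null elements along a rapidly growing subsequence $n_1<n_2<\cdots$ of indices. Since appending linear measurements can only shrink the kernel, $\ker N_{n_j}\subseteq\ker N_{n_k}$ whenever $j\geqslant k$, and this nesting is the structural feature that drives the construction.

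First I would extract, for each $k$, an element $g_k\in F$ with $\Vert g_k\Vert\leqslant 1$, $N_{n_k}(g_k)=\bszero$, and $S(g_k)\geqslant r_{n_k}/2$, using the definition of $r_n$ and flipping sign as needed. Summability of $(a_k)$ together with completeness of $F$ guarantees $f(\varepsilon)\in F$ for every sign pattern, with norm bounded by $\sum_k a_k$. Continuity of $S$ and each $L_n$ lets me expand term by term; by the nesting property $N_{n_k}(f(\varepsilon))=\sum_{j<k}\varepsilon_j a_j N_{n_k}(g_j)$, so the input fed to $\phi_{n_k}$ depends only on $\varepsilon_1,\ldots,\varepsilon_{k-1}$, while $S(f(\varepsilon))=\sum_j \varepsilon_j a_j S(g_j)$ sees every coordinate.

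The decisive step is then a conditional Rademacher argument. Writing the error at index $n_k$ as
$$
E_k(\varepsilon)=S(f(\varepsilon))-\phi_{n_k}(N_{n_k}(f(\varepsilon)))=\varepsilon_k a_k S(g_k)+R_k,
$$
where $R_k$ does not depend on $\varepsilon_k$, for any fixed value of the other signs the two possibilities $R_k\pm a_k S(g_k)$ cannot both have absolute value below $a_k|S(g_k)|$. Hence $\PP(|E_k|\geqslant a_k|S(g_k)|)\geqslant 1/2$ for every $k$. Letting $A_k$ denote this event, reverse Fatou for sets yields $\PP(\limsup_k A_k)\geqslant\limsup_k \PP(A_k)\geqslant 1/2$, so there exists at least one sign pattern $\varepsilon$ for which $|E_k(\varepsilon)|\geqslant a_k r_{n_k}/2$ holds for infinitely many $k$.

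To close the argument, I would pick $(a_k,n_k)$ so that the target ratio blows up along the good subsequence: for instance $a_k=1/k^2$ and $n_k$ chosen so that $\log\log n_k\geqslant k^4$. Then $\sum_k a_k<\infty$ but
$$
\frac{|E_k(\varepsilon)|}{(\log\log n_k)^{-1}r_{n_k}}\geqslant\frac{a_k\log\log n_k}{2}\longrightarrow\infty,
$$
which gives \eqref{eqn:target}. The main obstacle is precisely the lack of independence of the events $A_k$ (since $R_k$ depends on all $\varepsilon_j$ with $j\neq k$, and $\phi_n$ is not even assumed measurable in general); this is handled cleanly by the reverse Fatou inequality, which needs only the pointwise bound $\PP(A_k)\geqslant 1/2$ and no joint distributional structure, together with the fact that on the relevant events each $N_{n_k}(f(\varepsilon))$ takes only finitely many values and so measurability is automatic.
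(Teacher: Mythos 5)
Your argument is essentially correct, and it is worth noting that it does something the paper does not: the paper's ``proof'' of Theorem~\ref{thm:asymptoticrate} is a one-line citation to Trojan's unpublished theorem as reported in Chapter~10 of Traub, Wasilkowski and Wo\'zniakowski, whereas you give a self-contained probabilistic gliding-hump construction. The key structural observations --- that the kernels of $N_n$ are nested, so $N_{n_k}(g_j)=\bszero$ for $j\geqslant k$ and hence $\phi_{n_k}(N_{n_k}(f(\varepsilon)))$ is a cylinder function of $\varepsilon_1,\dots,\varepsilon_{k-1}$ taking finitely many values (which disposes of the measurability worry about arbitrary $\phi_n$), combined with the conditional sign-flip bound $\PP(A_k)\geqslant 1/2$ and reverse Fatou to get $\PP(\limsup_k A_k)\geqslant 1/2>0$ --- are all sound, and the choice $a_k=k^{-2}$, $\log\log n_k\geqslant k^4$ correctly turns the lower bound $|E_k|\geqslant a_k r_{n_k}/2$ along infinitely many $k$ into the divergence in \eqref{eqn:target}. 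What your route buys is transparency and independence from an unpublished source; what the citation buys is greater generality (Trojan's theorem covers general solution operators and information, not just functionals).

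Two small caveats you should make explicit. First, you use continuity of $S$ to expand $S(f(\varepsilon))$ term by term, but the theorem as stated only assumes $S$ is a linear functional; this is harmless because $S$ is continuous in the paper's application (it is represented by $\bsone$ in the RKHS), and more generally $r_n<\infty$ for some $n$ forces $S$ to be bounded (as $\ker N_n$ is closed of finite codimension), so the discontinuous case is exactly the degenerate case $r_n\equiv+\infty$ where the statement carries no content. Second, your sign-flip step needs $S(g_k)>0$, hence $r_{n_k}>0$; if $r_n=0$ eventually then $S$ is constant on the fibers of $N_n$ and the ratio in \eqref{eqn:target} is $0/0$, so the theorem must implicitly assume $r_n>0$ anyway. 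Neither caveat affects the application in Section~\ref{sec:proofofbound}, where Theorem~\ref{thm:Roth} guarantees $r_n>0$.
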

\begin{proof} This follows from Theorem 2.1.1 in Chapter 10 of
\cite{trau:wasi:wozn:1988}
who cite unpublished work by Trojan.
\end{proof}

We will set $F$ to be some reproducing kernel Hilbert space (RKHS) contained in BVHK$[0,1]^d$, $S(f)$ to be $\int_{[0,1]^d}f(\bsy)\,\rd\bsy$ and $N_n(f)$ to be $(f(\bsx_0),\ldots,f(\bsx_{n-1}))$. We note that evaluation at $\bsx_i$ is a continuous linear functional in an RKHS. As the theorem suggests, we can use $r_n/\log\log n$ as a lower bound on the asymptotic convergence rate achievable by all functions in BVHK$[0,1]^d$, so it remains to determine a lower bound on $r_n$.

To derive such a lower bound, we will apply the proof techniques used in Roth's lower bound on the $L^2$ discrepancy.
See Chen and Travaglini \cite{chen:trav:2009} for a nice summary.
Dick, Hinrichs and Pillichshammer \cite{dick:hinr:pill:2015}
use this strategy to prove that the worst-case error of any equal-weight quadrature rule is
lower bounded by $\Omega(n^{-1}(\log n)^{(d-1)/2})$ when $F$ is the RKHS with kernel $K(\bsx,\bsy)=\prod_{j=1}^d (1+\min(x_j,y_j))$. Wozniakowski \cite{wozn:1991} points out that the same strategy still works if the equal weight requirement is removed.
Below we illustrate Roth's technique by showing that $r_n=\Omega(n^{-1}(\log n)^{(d-1)/2})$ if $F$ is chosen to be the  RKHS with kernel
 \begin{align}\label{eqn:kernel}
     K(\bsx,\bsy)=\prod_{j=1}^d \frac{4}{3}+\frac{1}{2}\Bigl(x_j^2+y_j^2-x_j-y_j-|x_j-y_j|\Bigr).
 \end{align}
This is the unanchored space introduced in
\cite{dick:sloa:wang:wozn:2004}.
It has the inner product
\begin{align}\label{eqn:innerproduct}
    (f,g)=\sum_{u\subseteq\{1,\ldots,d\}}\int_{[0,1]^{|u|}}\bigg(\int_{[0,1]^{d-|u|}}\frac{\partial^{|u|}f}{\partial \bsy_u}(\bsy)\,\rd\bsy_{-u}\bigg) \bigg(\int_{[0,1]^{d-|u|}}\frac{\partial^{|u|}g}{\partial \bsy_u}(\bsy)\,\rd\bsy_{-u}\bigg)\, \rd\bsy_u
\end{align}
where ${\partial^{|u|}f}/{\partial \bsy_u}$ is the partial derivative of $f$ taken once with respect to
each $y_j$ with $j\in u$. 
Any $f$ belonging to this RKHS has mixed partial derivative
${\partial^{|u|}f}/{\partial \bsy_u}\in L^2$ for any $u\subseteq\{1,\ldots,d\}$, so $f$ belongs to BVHK$[0,1]^d$
using equation (5) and Proposition~13 of \cite{variation}.

Letting $\bsone\in F$ be the function equal to $1$ for all $\bsy\in[0,1]^d$,
it is straightforward to verify that
$$(f,\bsone)=\int_{[0,1]^{d}}f(\bsy) \,\rd\bsy=S(f).$$
In other words, the function $\bsone$ is the Riesz representation of integration over $[0,1]^d$. Moreover, by the reproducing property
$(f,K(\bsx_i,\cdot))=f(\bsx_i).$
Therefore
\begin{align*}
    r_n&=\sup\bigl\{|S(f)|\mid f\in F, f(\bsx_0)=\cdots=f(\bsx_{n-1})=0, \Vert f\Vert\leqslant 1\bigr\}\\
    &=\sup\bigl\{|(f,\bsone)|\mid f\in F, (f,K(\bsx_0,\cdot))=\cdots=(f,K(\bsx_{n-1},\cdot))=0, \Vert f\Vert\leqslant 1\bigr\}.
\end{align*}
This is the well-known least squares projection problem. The maximizer is proportional to the projection of $\bsone$ into the orthogonal complement of the linear span of $\{K(\bsx_0,\cdot),\ldots,K(\bsx_{n-1},\cdot)\}$.
Therefore
\begin{align*}
    r_n=\min_{a_0,\ldots,a_{n-1}} \Bigl\Vert\bsone-\sum_{i=0}^{n-1} a_i K(\bsx_i,\cdot)\Bigr\Vert.
\end{align*}
Now we prove that $\Vert\bsone-\sum_{i=0}^{n-1} a_i K(\bsx_i,\cdot)\Vert=\Omega(n^{-1}(\log n)^{(d-1)/2})$
for any choice of $a_0,\dots,a_{n-1}$, including  $a_i=1/n$ as used in QMC.
\begin{theorem}\label{thm:Roth}
Let $K$ be the kernel~\eqref{eqn:kernel} for the unanchored RKHS.
For any points  $\bsx_0,\ldots,\bsx_{n-1}\in[0,1]^d$ and any  weights $a_0,\ldots,a_{n-1}\in\bbR$
\begin{align}\label{eq:Roth}
\Bigl\Vert1-\sum_{i=0}^{n-1} a_i K(\bsx_i,\cdot)\Bigr\Vert\geqslant A_d\frac{(\log n)^{(d-1)/2}}{n}
\end{align}
holds for some positive number $A_d$ independent of $n$.
\end{theorem}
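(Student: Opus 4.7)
My plan is to translate Roth's classical $L^2$-discrepancy lower bound into the norm of $F$, following the template of Dick--Hinrichs--Pillichshammer~\cite{dick:hinr:pill:2015} (who prove the analogous result for the anchored variant of $F$ with equal weights) together with Wo\'zniakowski's extension~\cite{wozn:1991} that allows arbitrary real weights. The first step is to use the inner product formula~\eqref{eqn:innerproduct} to expand the squared norm as a sum of $L^2$ norms of marginal partial derivatives. With $\phi_x(y):=\partial_yk_0(x,y)=y-\bbone\{y>x\}$, computing the mixed partials of $e^\star:=1-\sum_i a_iK(\bsx_i,\cdot)$ factor by factor and using the identity $\int_0^1 k_0(x,y)\rd y=1$ to collapse the outer integrations gives
\begin{equation*}
\Vert e^\star\Vert^{2}=\Bigl(1-\sum_i a_i\Bigr)^{\!2}+\sum_{\emptyset\neq u\subseteq\{1,\ldots,d\}}\int_{[0,1]^{|u|}}\Bigl(\sum_{i=0}^{n-1}a_i\prod_{j\in u}\phi_{x_{ij}}(y_j)\Bigr)^{\!2}\rd\bsy_u,
\end{equation*}
so retaining only the $u=\{1,\ldots,d\}$ term reduces~\eqref{eq:Roth} to the purely $L^2$ statement $\Vert F^\star\Vert_{L^2([0,1]^d)}^{2}\geqslant A_d^2(\log n)^{d-1}/n^2$ for $F^\star(\bsy):=\sum_i a_i\prod_{j=1}^d\phi_{x_{ij}}(y_j)$.

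For the second step I would apply Roth's Haar--Parseval argument to $F^\star$. Pick $m$ with $2n\leqslant 2^{m-d}<4n$ and take the tensor Haar wavelets $H_{\bsk,\bsc}=\prod_jh_{k_j,c_j}$ at multi-indices $\bsk\geqslant\bsone$ with $|\bsk|=m$; each wavelet is supported on an elementary interval $E(\bsk-\bsone,\bsc)$ of volume $2^{d-m}\sim 1/n$, there are $2^{m-d}>n$ such boxes tiling $[0,1]^d$ at each shape, and $\Vert H_{\bsk,\bsc}\Vert_{L^2}^2=2^{d-m}\sim 1/n$. A one-variable calculation yields
\begin{equation*}
\langle\phi_x,h_{k,c}\rangle=\begin{cases}\Theta(2^{-k})&\text{if }x\in\mathrm{supp}(h_{k,c}),\\-2^{-2k}&\text{otherwise,}\end{cases}
\end{equation*}
with the ``jump'' contribution coming from $\bbone\{y\leqslant x\}$ and the deterministic polynomial moment from $-(1-y)$, so $\langle F^\star,H_{\bsk,\bsc}\rangle=\sum_i a_i\prod_j\langle\phi_{x_{ij}},h_{k_j,c_j}\rangle$ factorizes over the $d$ coordinates for each $\bsx_i$.

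The third step is Roth's pigeonhole at each shape. Because $2^{m-d}>n$, at least $n$ of the $2^{m-d}$ boxes $E(\bsk-\bsone,\bsc)$ contain no $\bsx_i$; for such a box the Haar coefficient of $F^\star$ contains a deterministic polynomial tail of size $\Theta(2^{-2m}|\sum_i a_i|)=\Theta(1/n^2)$ (we may assume $|\sum_ia_i-1|\leqslant\tfrac12$, since otherwise the $(1-\sum a_i)^2$ term in the decomposition already exceeds the target for all large enough $n$). Each such empty box then contributes of order $(1/n^2)^2\cdot n=1/n^3$ to the Parseval sum; summing across the $\sim n$ empty boxes per shape yields $\Omega(1/n^2)$ per shape, and summing over the $\binom{m+d-1}{d-1}=\Theta((\log n)^{d-1})$ admissible shapes delivers $\Vert F^\star\Vert_{L^2}^2\geqslant A_d^2(\log n)^{d-1}/n^2$, which is~\eqref{eq:Roth} after a square root.

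The hard part, and the only non-routine piece of the argument, is verifying that the ``partial-inclusion'' contributions to $\langle F^\star,H_{\bsk,\bsc}\rangle$---from points $\bsx_i$ whose coordinates are in the 1D supports of the $h_{k_j,c_j}$ for some but not all $j$---do not systematically cancel the deterministic $1/n^2$ tail in too many empty boxes at once. Each such partial term can individually be as large as $O(2^{-m-1})=O(1/n)$, which could in principle swamp the tail, so a naive per-box estimate fails; the saving grace, which is exactly Wo\'zniakowski's observation, is that these partial contributions are controlled by only the $n$ real parameters $a_i$ whereas there are $\binom{m+d-1}{d-1}\cdot 2^{m-d}\gg n$ shape--box pairs at the critical level, so only a small fraction can be tuned to annihilate the tail. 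Adapting his counting argument to the unanchored kernel at hand is the step that would take real work.
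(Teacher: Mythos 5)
Your route is the same one the paper takes: drop all terms of \eqref{eqn:innerproduct} except $u=\varnothing$, which gives the $(1-\sum_ia_i)^2$ contribution, and $u=\{1,\ldots,d\}$, which reduces \eqref{eq:Roth} to an $L^2$ lower bound for $F^\star(\bsy)=\sum_i a_i\prod_{j=1}^d(y_j-1\{y_j>x_{ij}\})$; then run Roth's dyadic argument at level $|\bsk|=m$ with $2^m\asymp n$, pigeonholing on empty boxes. The steps you actually execute are correct, and your bookkeeping ($\Theta((\log n)^{d-1})$ shapes, at least $n$ empty boxes per shape, a deterministic moment of order $4^{-m}\sum_ia_i$ per empty box) matches the paper's; using Bessel's inequality in place of the paper's explicit test function $h=\sum_{\bsk}\sum_{\bsc\in P_\bsk}U_{\bsk,\bsc}$ and Cauchy--Schwarz is an immaterial difference.

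The gap is the one you flag yourself, and your proposal does not close it: the appeal to a counting argument over the $n$ parameters $a_i$ versus the $\gg n$ shape--box pairs is a heuristic, not a proof (and the observation in \cite{wozn:1991} is the $|1-\sum_ia_i|$ device for unequal weights, not a mechanism for controlling these cross terms). You should be aware, however, that you have put your finger on precisely the point where the paper's own argument is too quick. The paper asserts $\int U_{\bsk,\bsc}(\bsy)\prod_{j}(y_j-1\{y_j>x_{ij}\})\,\rd\bsy=4^{-(m+d)}$ for every $\bsc\in P_\bsk$ and every $i$; but the integral factors as $\prod_j\bigl(4^{-k_j-1}-\epsilon_{ij}\bigr)$ with $\epsilon_{ij}=\int_{I_j}u_j(y)1\{y>x_{ij}\}\,\rd y\in[0,2^{-k_j-1}]$, and $\epsilon_{ij}=0$ only when $x_{ij}$ lies outside the one-dimensional interval $I_j$ (or at its left endpoint). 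Emptiness of $E(\bsk,\bsc)$ forces this for at least one $j$, not for all $j$. For instance with $d=2$, $\bsk=(1,m-1)$, $x_{i1}=1/8$ and $x_{i2}$ outside $I_2$, the first factor is $4^{-2}-1/8=-4^{-2}$ and the box contributes $-4^{-(m+2)}$, the negative of the claimed value; more generally, when $d-1$ coordinates fall inside their intervals the contribution can reach order $2^{-m}\gg4^{-m}$, exactly as you estimate. So neither your sketch nor the written proof, as they stand, controls the partial-inclusion terms for this unanchored kernel; a complete argument must do so explicitly (for example by computing all Haar coefficients and bounding the cross terms in aggregate, or by transferring the bound from the anchored kernel of \cite{dick:hinr:pill:2015}, whose mixed derivative $\prod_j1\{y_j<x_j\}$ is a pure product of indicators and genuinely kills every empty box). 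Identifying this as the hard part was the right call; stopping at a heuristic is where the proposal falls short of a proof.
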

\begin{proof}
Because the function $\bsone$ is the Riesz representation of integration over $[0,1]^d$,
$(\bsone,\bsone)=\int_{[0,1]^{d}}\bsone \,\rd\bsy=1,$ and
$$(K(\bsx,\cdot),1)=\prod_{j=1}^d \int_0^1 \frac{4}{3}+\frac{1}{2}\Bigl(x_j^2+y_{j}^2-x_j-y_{j}-|x_j-y_{j}|\Bigr)\,\rd y_j=1.$$
Therefore
$$\Bigl(\bsone-\sum_{i=0}^{n-1} a_i K(\bsx_i,\cdot),\bsone\Bigr)
=(\bsone,\bsone)-\sum_{i=0}^{n-1} a_i (K(\bsx_i,\cdot),\bsone)=1-\sum_{i=0}^{n-1} a_i.$$
On the other hand,
$$\Bigl|\Bigl(\bsone-\sum_{i=0}^{n-1} a_i K(\bsx_i,\cdot),\bsone\Bigr)\Bigr|
\leqslant \Vert\bsone\Vert\times \Bigl\Vert\bsone-\sum_{i=0}^{n-1} a_i K(\bsx_i,\cdot)\Bigr\Vert=\Bigl\Vert\bsone-\sum_{i=0}^{n-1} a_i K(\bsx_i,\cdot)\Bigr\Vert,$$
so we get the first lower bound
\begin{equation}\label{eqn:bound1}
    \Bigl\Vert1-\sum_{i=0}^{n-1} a_i K(\bsx_i,\cdot)\Bigr\Vert\geqslant \Bigl|1-\sum_{i=0}^{n-1} a_i\Bigr|.
\end{equation}
Next we evaluate $\Vert1-\sum_{i=0}^{n-1} a_i K(\bsx_i,\cdot)\Vert^2$ directly.
If we ignore all summands except for $u=\{1,\ldots,d\}$ in \eqref{eqn:innerproduct}, we get
\begin{align}\label{eqn:ainorm}
\Bigl\Vert1-\sum_{i=0}^{n-1} a_i K(\bsx_i,\cdot)\Bigr\Vert^2&\geqslant \int_{[0,1]^d} \bigg(\sum_{i=0}^{n-1} a_i\frac{\partial^{d}K(\bsx_i,\bsy)}{\partial y_1\cdots\partial y_d}\bigg)^2 \,\rd\bsy \nonumber\\
   &= \int_{[0,1]^d} \bigg(\sum_{i=0}^{n-1} a_i\prod_{j=1}^d (y_j-1\{y_j>x_{ij}\})\bigg)^2 \,\rd\bsy
\end{align}
where $x_{ij}$ is the $j$th component of $\bsx_i$.

The left hand side of~\eqref{eqn:ainorm} is a squared norm in the RKHS
while the right hand side is a plain $L^2$ squared norm.
To provide a lower bound, we construct a function $h$ satisfying
\begin{align*}
&\int_{[0,1]^d} h(\bsy)^2 \,\rd\bsy=O((\log n)^{d-1}),\quad\text{and}\\
&\int_{[0,1]^d} h(\bsy)\bigg(\sum_{i=0}^{n-1} a_i\prod_{j=1}^d (y_j-1\{y_j>x_{ij}\})\bigg) \,\rd\bsy=\Omega\Bigl(\frac{(\log n)^{d-1}}{n}\Bigr),
\end{align*}
neither of which involve the RKHS inner product, so the function $h$ does not have to be in the RKHS.

Define $E(\bsk,\bsc)$ to be the $d$-dimensional interval
from~\eqref{eq:elemint} with $b=2$, that is
$$
E(\bsk,\bsc)=\prod_{j=1}^d\Bigl[
\frac{c_j}{2^{k_j}},
\frac{c_j+1}{2^{k_j}}
\Bigr)
$$
where $\bsk = (k_1,\dots,k_d)\in\bbZ^d$
and $\bsc = (c_1,\dots,c_d)\in\bbZ^d$
satisfy $k_j\geqslant0$ and $0\leqslant c_j <2^{k_j}$. Given $\bsk$, we define $|\bsk|=\sum_{j=1}^dk_j$.
For a given vector $\bsk$,
the $2^{|\bsk|}$ elementary intervals $E(\bsk,\bsc)$
partition $[0,1)^d$ into congruent sub-intervals.

For each $E(\bsk,\bsc)$, define $U_{\bsk,\bsc}(\bsy)$ by
$$
U_{\bsk,\bsc}(\bsy)=
\begin{cases}
(-1)^{\sum_{j=1}^d 1\{2^{k_j}y_j-c_j<1/2\}}, & \bsy\in E(\bsk,\bsc)\\
0, &\text{else}.
\end{cases}
$$
If we divide $E(\bsk,\bsc)$ into $2^d$ sub-intervals, the value of $U_{\bsk,\bsc}(\bsy)$ is constant on each sub-interval and it alternates between $1$ and $-1$.

It is straightforward to verify that $\int_{[0,1]^d}U_{\bsk,\bsc}(\bsy)U_{\bsk',\bsc'}(\bsy)\,\rd\bsy=0$ if $\bsk\neq \bsk'$ or $\bsc\neq \bsc'$. It is trivially true if $E(\bsk,\bsc)\cap E(\bsk',\bsc')=\varnothing$.
If instead $E(\bsk,\bsc)\cap E(\bsk',\bsc')\neq \varnothing$, then there must be some $j$ with $k_j<k'_j$ and observe that as a function of $y_j$, $U_{\bsk,\bsc}(\bsy)U_{\bsk',\bsc'}(\bsy)$ equals $1$ on a $1/2^{k_j+1}$-length interval and equals $-1$ on an adjacent $1/2^{k_j+1}$-length interval, so the integration over variable $j$ always returns $0$. Then for any set $P$ of $(\bsk,\bsc)$ pairs,
\begin{equation}\label{eqn:Unorm}
   \int_{[0,1]^d}\bigg(\sum_{(\bsk,\bsc)\in P}U_{\bsk,\bsc}(\bsy)\bigg)^2 \,\rd\bsy=\sum_{(\bsk,\bsc)\in P}\int_{[0,1]^d}U_{\bsk,\bsc}(\bsy)^2 \,\rd\bsy=\sum_{(\bsk,\bsc)\in P} 2^{-|\bsk|}.
\end{equation}

Now let $\mathcal{P}=\{\bsx_0,\ldots,\bsx_{n-1}\}$ and choose $m$ so that $2n\leqslant 2^m<4n$. For any $\bsk$ with $|\bsk|=m$, define the set $P_\bsk$ to be
$$P_{\bsk} =\{\bsc\mid E(\bsk,\bsc)\cap \mathcal{P}=\varnothing\}.$$
For each $\bsc\in P_\bsk$,
$$\int_{[0,1]^d}U_{\bsk,\bsc}(\bsy)\prod_{j=1}^d (y_j-1\{y_j>x_{ij}\}) \,\rd\bsy=\frac{1}{4^{m+d}}.$$
Because there are $2^m$ intervals associated with $\bsk$, the cardinality of $P_{\bsk}$ is at least $2^m-n\geqslant n$. Hence
\begin{equation}\label{eqn:Uprod}
    \int_{[0,1]^d} \bigg( \sum_{\bsc\in P_{\bsk}}  U_{\bsk,\bsc}(\bsy)\bigg)\prod_{j=1}^d (y_j-1\{y_j>x_{ij}\})\,\rd\bsy\geqslant \frac{n}{4^{m+d}}.
\end{equation}
Now we define
$$h(\bsy)=\sum_{\bsk:|\bsk|=m} \sum_{\bsc\in P_{\bsk}} U_{\bsk,\bsc}(\bsy).$$
The number of $\bsk$ with $|\bsk|=m$ is the number of ways to partition $m$ into $d$ nonnegative ordered integers,
which equals ${m+d-1\choose d-1}$. Equation~\eqref{eqn:Unorm} and $2n\leqslant 2^m<4n$ imply that
$$\int_{[0,1]^d} h(\bsy)^2 \,\rd\bsy=\sum_{\bsk:|\bsk|=m} \sum_{\bsc\in P_{\bsk}} 2^{-m}\leqslant \sum_{\bsk:|\bsk|=m}1={m+d-1\choose d-1}\leqslant C_d (\log n)^{d-1}$$
for some positive number $C_d$ independent of $n$. On the other hand, equation~\eqref{eqn:Uprod} implies that
$$    \int_{[0,1]^d} h(\bsy)\prod_{j=1}^d (y_j-1\{y_j>x_{ij}\})\,\rd\bsy\geqslant {m+d-1\choose d-1}\frac{n}{4^{m+d}}\geqslant  \frac{c_d(\log n)^{d-1}}{n}$$
for another positive number $c_d$ independent of $n$.

By the Cauchy-Schwarz inequality and equation~\eqref{eqn:ainorm}
\begin{align*}
   &\int_{[0,1]^d} h(\bsy)\bigg(\sum_{i=0}^{n-1} a_i\prod_{j=1}^d (y_j-1\{y_j>x_{ij}\})\bigg) \,\rd\bsy\\
   &\leqslant \bigg(\int_{[0,1]^d} h(\bsy)^2 \,\rd\bsy\bigg)^{\frac{1}{2}}\bigg(\int_{[0,1]^d} \bigg(\sum_{i=0}^{n-1} a_i\prod_{j=1}^d (y_j-1\{y_j>x_{ij}\})\bigg)^2 \,\rd\bsy\bigg)^{\frac{1}{2}}\\
   &\leqslant \bigg(\int_{[0,1]^d} h(\bsy)^2 \,\rd\bsy\bigg)^{\frac{1}{2}}\Bigl\Vert\bsone-\sum_{i=0}^{n-1} a_i K(\bsx_i,\cdot)\Bigr\Vert
\end{align*}
which provides the lower bound
\begin{align*}
    \Bigl\Vert1-\sum_{i=0}^{n-1} a_i K(\bsx_i,\cdot)\Bigr\Vert
&\geqslant (C_d (\log n)^{d-1})^{-\frac{1}{2}} \frac{c_d(\log n)^{d-1}}{n} \sum_{i=0}^{n-1} a_i=\bigg(\frac{c_d}{C_d^{1/2}}\sum_{i=0}^{n-1} a_i\bigg)\frac{(\log n)^{(d-1)/2}}{n} .
\end{align*}

Combining the above lower bound with equation~\eqref{eqn:bound1} we get
$$\Bigl\Vert\bsone-\sum_{i=0}^{n-1} a_i K(\bsx_i,\cdot)\Bigr\Vert
\geqslant \max\Biggl(\Bigl|1-\sum_{i=0}^{n-1} a_i\Bigr|,\bigg(\frac{c_d}{C_d^{1/2}}\sum_{i=0}^{n-1} a_i\bigg)\frac{(\log n)^{(d-1)/2}}{n} \Biggr).$$
For $\lambda>0$,
$$
\min_{a\in\bbR}\max( |1-a|,\lambda a) = \frac{\lambda}{\lambda+1}
$$
so that
\begin{align*}
    \Bigl\Vert1-\sum_{i=0}^{n-1} a_i K(\bsx_i,\cdot)\Bigr\Vert
&\geqslant
\frac{{c_d(\log n)^{(d-1)/2}}/{(nC_d^{1/2})}}{1+{c_d(\log n)^{(d-1)/2}}/{(nC_d^{1/2})}}
\end{align*}
and we let $A_d =(c_d/C_d^{1/2})/(1+c_dM_d/C_d^{1/2})$
for $M_d = \sup_{n\in\bbN}\log(n)^{(d-1)/2}/n$.
\end{proof}
\begin{corollary}
For any sequence of points $(\bsx_i)_{i\geqslant0}$ in $[0,1]^d$, there exists a function $f$ in the RKHS with kernel defined by \eqref{eqn:kernel} such that
$$\limsup_{n\to \infty}\frac{\big|\int_{[0,1]^{d}}f(\bsx) \,\rd\bsx-\frac{1}{n}\sum_{i=0}^{n-1} f(\bsx_i)\big|}{(n\log\log n)^{-1}(\log n)^{(d-1)/2}}=+\infty$$
\end{corollary}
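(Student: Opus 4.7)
The plan is to invoke Theorem~\ref{thm:asymptoticrate} with the Banach space $(F,\Vert\cdot\Vert)$ taken to be the RKHS with kernel~\eqref{eqn:kernel}, the linear functional $S(f)=\int_{[0,1]^d}f(\bsy)\,\rd\bsy$, and the continuous linear functionals $L_i(f)=f(\bsx_{i-1})$ for $i=1,\dots,n$. Point evaluation is continuous in any RKHS by the reproducing property, so the hypotheses are met. The vector $N_n(f)=(f(\bsx_0),\dots,f(\bsx_{n-1}))$ then packages the QMC samples, and the equal-weight QMC estimate corresponds to the specific choice
\[
\phi_n(y_0,\dots,y_{n-1})=\frac{1}{n}\sum_{i=0}^{n-1}y_i,
\]
so that $\phi_n(N_n(f))=\frac{1}{n}\sum_{i=0}^{n-1}f(\bsx_i)$ and $S(f)-\phi_n(N_n(f))$ is the QMC error.

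Theorem~\ref{thm:asymptoticrate} then produces some $f\in F$ with
\[
\limsup_{n\to\infty}\frac{|S(f)-\phi_n(N_n(f))|}{(\log\log n)^{-1}r_n}=+\infty.
\]
The second step is to substitute the Roth-type lower bound from Theorem~\ref{thm:Roth}. The RKHS duality argument carried out just before that theorem identifies $r_n=\min_{a_0,\dots,a_{n-1}}\Vert\bsone-\sum_i a_i K(\bsx_i,\cdot)\Vert$, and Theorem~\ref{thm:Roth} shows this minimum is at least $A_d(\log n)^{(d-1)/2}/n$. Replacing $r_n$ in the denominator by this smaller quantity only shrinks the denominator, so the ratio still diverges and we obtain
\[
\limsup_{n\to\infty}\frac{|S(f)-\phi_n(N_n(f))|}{(n\log\log n)^{-1}(\log n)^{(d-1)/2}}=+\infty
\]
after absorbing the constant $A_d$, which is exactly the corollary.

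Because the corollary is essentially a one-line specialization of two results already proved, there is no real obstacle. The only things worth verifying carefully are cosmetic: that each evaluation $L_i$ is a continuous linear functional on $F$ (immediate from reproducing), that the denominator in Theorem~\ref{thm:asymptoticrate} is indeed bounded below by the Theorem~\ref{thm:Roth} quantity (since $r_n$ from the sup formulation and from the min formulation coincide by Hilbert-space duality), and that the constant can be harmlessly absorbed. If desired, one inherits extra flexibility from Theorem~\ref{thm:asymptoticrate}: the $\log\log n$ in the denominator may be replaced by any sequence diverging to infinity more slowly.
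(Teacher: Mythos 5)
Your proposal is correct and is essentially identical to the paper's own proof: the paper likewise sets $F$ to be the unanchored RKHS, $S$ the integration functional, $L_i$ the point evaluations, $\phi_n$ the equal-weight average, and then applies Theorem~\ref{thm:asymptoticrate} with the lower bound on $r_n$ from Theorem~\ref{thm:Roth}. Your extra remarks (continuity of point evaluation, the duality identification of $r_n$, absorbing the constant) only make explicit what the paper establishes in the discussion preceding Theorem~\ref{thm:Roth}.
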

\begin{proof}
Apply Theorem~\ref{thm:asymptoticrate} with the lower bound on $r_n$ from Theorem~\ref{thm:Roth}.
\end{proof}


\section{Discrepancy and the case of $d=1$}\label{sec:d=1}

Let $x_0,x_1,\dots,x_{n-1}\in[0,1]$.
The local discrepancy of these points at $\alpha\in[0,1]$ is
$$
\delta_n(\alpha) = \frac1n\sum_{i=0}^{n-1}1\{x_i<\alpha\}-\alpha
$$
and the star discrepancy is $D_n^*=\sup_{0\leqslant\alpha\leqslant1}|\delta_n(\alpha)|$.
No infinite sequence $x_i$ can have $D_n^*=o(\log(n)/n)$.
Using results from discrepancy theory we see below that there are specific
values of $\alpha$ for which $D_n^*=\Omega(\log(n)/n)$.
For those values $1\{x<\alpha\}-\alpha$ is a persistent
fooling function.  We show below that $f(x)=x$ is also a persistent
fooling function for the van der Corput sequence.

The set of $\alpha\in[0,1]$
with $|\delta_n(\alpha)|=o(\log(n)/n)$ has Hausdorff dimension 0
for any sequence $(x_i)_{i\geqslant0}\subset[0,1]$. See \cite{hala:1981}.
So $r=1$ is not just available for $d=1$ it is the usual
rate for functions of the form $f(x) = 1\{x<\alpha\}$.

For $x_i$ taken from the van der Corput sequence,
and $\alpha = \sum_{k=1}^\infty a_k/2^k$ for bits $a_k\in\{0,1\}$,
Drmota, Larcher and Pillichshammer \cite{drmo:larc:pill:2005} note
that $n|\delta_n(\alpha)|$
is bounded as $n\to\infty$, if and only if $\alpha$ has a representation
with only finitely many nonzero $a_k$.  Further, letting
$$h_\alpha(m) = \#\{ k<m\mid a_k\ne a_{k+1}\}$$
their Corollary 1 in our notation has
\begin{align}\label{eq:theircor1}
\lim_{m\to\infty}\frac1{2^m}\#\bigl\{
1\leqslant n \leqslant 2^m \mid n\delta_n > (1-\epsilon) h_\alpha(m)
\bigr\}=1
\end{align}
for any $\epsilon >0$.
The base $2$ representation of $2/3$ is $0.10101\cdots$
and so $h_{2/3}(m)=m$.
It follows that $f(x) = 1\{x<2/3\}$ has
$|\hat\mu_n-\mu|>c\log(n)/n$ infinitely often for some $c>0$.
Even more, the fraction of such $n$ among the first $N=2^m$
sample sizes becomes ever closer to $1$ as $m\to\infty$.

If we average the local discrepancy over $\alpha$ we get
$$
\int_0^1\delta_n(\alpha)\,\rd\alpha = \frac1n\sum_{i=0}^nx_i-\frac12
$$
which is the integration error for the function $f(x)=x$
that we study next.
In our study of $f(x)=x$, we use sample sizes $n$
with base $2$ expansion $10101\cdots101$.
That is, for some $L\geqslant1$
$$n=n_L = \sum_{\ell=0}^L4^{\ell}.$$
The first few values of $n_L$ are
$1$, $5$, $21$, $85$, $341$, $1365$, and $5461$.

\begin{proposition}\label{prop:sumx}
For integers $0\leqslant i<n$, let $x_i$ be the $i$'th van der Corput
point in base $2$.
Then
\begin{align}\label{eq:sumx}
\sum_{i=0}^{n-1}x_i =
\sum_{k=1}^\infty
\lfloor 2^{-k-1}n\rfloor+(2^{-k}n-2\lfloor 2^{-k-1}n\rfloor-1)_+.
\end{align}
\end{proposition}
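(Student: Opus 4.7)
The plan is to start from the base-$2$ expansion of the index, swap the order of summation, and reduce to counting how many integers in $\{0,1,\dots,n-1\}$ have a given binary digit equal to $1$.

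First, for every integer $i\geqslant 0$ write $i=\sum_{k\geqslant 1} i_k 2^{k-1}$ with $i_k\in\{0,1\}$ and only finitely many $i_k$ nonzero. By the definition of the van der Corput sequence recalled in Section~\ref{sec:back}, we have $x_i=\sum_{k\geqslant 1} i_k 2^{-k}$. Interchanging the finite sum over $i$ with the (effectively finite) sum over $k$ gives
\begin{equation*}
\sum_{i=0}^{n-1} x_i \;=\; \sum_{k=1}^\infty 2^{-k} N_k(n),\qquad N_k(n)\;=\;\#\{0\leqslant i<n:i_k=1\}.
\end{equation*}
So the entire problem reduces to a clean closed form for $N_k(n)$.

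Next, I would compute $N_k(n)$ by exploiting the periodic pattern of the $k$-th binary digit: over any block of $2^k$ consecutive integers, $i_k$ equals $0$ on the first $2^{k-1}$ values and $1$ on the last $2^{k-1}$. Write $n=2^k\lfloor n/2^k\rfloor+r$ with $r=n-2^k\lfloor n/2^k\rfloor\in[0,2^k)$. The full blocks of length $2^k$ contribute $2^{k-1}\lfloor n/2^k\rfloor$ ones; in the residual block of length $r$ the number of ones is $\max(0,r-2^{k-1})=(r-2^{k-1})_+$. Therefore
\begin{equation*}
N_k(n)\;=\;2^{k-1}\lfloor n/2^k\rfloor\;+\;\bigl(n-2^k\lfloor n/2^k\rfloor-2^{k-1}\bigr)_+.
\end{equation*}

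Dividing by $2^k$ and summing over $k$ gives, after a re-indexing between $\lfloor n/2^k\rfloor$ and $\lfloor n/2^{k+1}\rfloor$ using the identity $\lfloor n/2^k\rfloor=2\lfloor n/2^{k+1}\rfloor+b_{k+1}(n)$ (where $b_{k+1}(n)\in\{0,1\}$ is the $(k+1)$-st binary digit of $n$), the form in \eqref{eq:sumx}. The main obstacle is not conceptual but bookkeeping: one must carefully verify that the shift in the floor arguments in \eqref{eq:sumx} correctly absorbs the factor of $\tfrac12$ that appears naturally from $2^{k-1}/2^k$ together with the positive-part term. I would verify the identity on a small example such as $n\in\{2,4,5\}$ to pin down the indexing before writing it out in general. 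Convergence issues are trivial because for any fixed $n$ both $\lfloor 2^{-k-1}n\rfloor$ and the positive-part correction vanish once $2^k>n$, so the series is in fact a finite sum.
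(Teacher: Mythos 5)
Your route is the same one the paper takes: expand $x_i=\sum_{k\geqslant1} i_k2^{-k}$, swap the order of summation, and count the ones among the $k$th binary digits of $0,\dots,n-1$. Your count of $N_k(n)$ is the part you actually carry out, and it is correct: the digit $i_k$ (the coefficient of $2^{k-1}$ in $i$) is periodic in $i$ with period $2^k$, equal to $0$ on the first $2^{k-1}$ indices of each period and to $1$ on the last $2^{k-1}$, so $N_k(n)=2^{k-1}\lfloor n/2^k\rfloor+(n-2^k\lfloor n/2^k\rfloor-2^{k-1})_+$ and hence
\begin{equation*}
\sum_{i=0}^{n-1}x_i\;=\;\sum_{k=1}^\infty\Bigl(\tfrac12\lfloor 2^{-k}n\rfloor+\bigl(2^{-k}n-\lfloor 2^{-k}n\rfloor-\tfrac12\bigr)_+\Bigr).
\end{equation*}
(The paper's own proof instead asserts that $i_k$ comes in alternating blocks of $2^k$ zeros and $2^k$ ones, i.e.\ period $2^{k+1}$, and reads off \eqref{eq:sumx} from that.)

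The gap is precisely the step you defer as ``bookkeeping.'' The $k$th summand of \eqref{eq:sumx} is exactly twice the $(k{+}1)$st summand of the display above, so the right-hand side of \eqref{eq:sumx} equals $2\sum_{k\geqslant2}\bigl(\tfrac12\lfloor 2^{-k}n\rfloor+(2^{-k}n-\lfloor 2^{-k}n\rfloor-\tfrac12)_+\bigr)=2\sum_{i=0}^{n-1}x_i-\lfloor n/2\rfloor$; no reindexing identity for $\lfloor n/2^k\rfloor$ will convert your (correct) expression into \eqref{eq:sumx}, since the two agree only when $\sum_i x_i=\lfloor n/2\rfloor$. The small-case check you propose but do not perform is exactly where this surfaces: for $n=2$ you have $N_1(2)=1$ (only $i=1$ has $i_1=1$), so $\sum_{i=0}^{1}x_i=x_0+x_1=1/2$, while the right-hand side of \eqref{eq:sumx} evaluates to $\lfloor 1/2\rfloor+(1-0-1)_+=0$. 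So your argument cannot be completed as written: either your closed form for $\sum_i x_i$ (which is verifiably correct) must be kept in place of \eqref{eq:sumx}, or the target identity must be restated with the index shift (equivalently, with the $k$-sum started at $k=0$ and the whole right-hand side halved). As it stands, asserting that the bookkeeping ``gives the form in \eqref{eq:sumx}'' is the one claim in your write-up that is false, and it is the only claim you did not check.
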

\begin{proof}
As $i$ increases from $0$, the $k$'th digit of $x_i$
comes in alternating blocks of $2^k$ zeros and $2^k$ ones,
starting with zeros.  For $0\leqslant i<r2^{k+1}$ with
an integer $r\geqslant0$, the $k$'th digits sum to $r2^k$
because there are $r$ complete blocks of $2^k$ ones.
The number of ones among $i_k$ for $0\leqslant i<n$ is then
$$
2^k\lfloor 2^{-k-1}n\rfloor+(n-2^{k+1}\lfloor 2^{-k-1}n\rfloor-2^k)_+
$$
where $z_+=\max(z,0)$.  The first term counts ones
from $\lfloor 2^{-k-1}n\rfloor$ complete blocks of $2^{k+1}$
indices.  That leaves an incomplete block of
$n-2^{k+1}\lfloor 2^{-k-1}n\rfloor$ indices $i$
of which the first $2^k$, should there be that many,
must be $0$s.  Any indices past the first $2^k$ are
ones providing the second term above.
To complete the proof, the $k$'th digits have a coefficient
of $2^{-k}$ in $x_i$ and summing over digits yields~\eqref{eq:sumx}.
\end{proof}

The sum over $k$ in~\eqref{eq:sumx} only needs to go
as far as the number of nonzero  binary digits
in $n-1$.
For larger $k$, both parts of the $k$'th term are zero.

\begin{proposition}
For $L\geqslant0$,  let $n_L = \sum_{\ell=0}^L4^\ell$.
For integers $i\geqslant0$ let $x_i$ be the van der Corput points in base $2$
and for $n\geqslant1$ let $\hat\mu_{n}=(1/n)\sum_{i=0}^{n-1}x_i$.
Then
$$
\limsup_{L\to\infty} \frac{n_L}{\log(n_L)} |\hat\mu_{n_L}-1/2|
>c
$$
if $0<c<1/(8\log(2))$.
\end{proposition}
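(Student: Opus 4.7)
The plan is to apply Proposition~\ref{prop:sumx} with $n=n_L$, evaluate each of its two summands using the binary structure of $n_L$, and read off the asymptotic size of $n_L(\hat\mu_{n_L}-1/2)=\sum_{i=0}^{n_L-1}x_i-n_L/2$ as $L\to\infty$.

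The binary expansion of $n_L$ is $10101\cdots 01$ of length $2L+1$, with ones exactly at positions $0,2,\ldots,2L$, so $s_2(n_L)=L+1$ and $n_L$ is odd. The positive-part term $(2^{-k}n_L-2\lfloor 2^{-k-1}n_L\rfloor-1)_+$ from Proposition~\ref{prop:sumx} is nonzero only when bit $k$ of $n_L$ equals one, i.e.\ for $k=2\ell$ with $1\leqslant\ell\leqslant L$; using $n_L\bmod 2^{2\ell}=(4^\ell-1)/3$ these simplify to $(1-4^{-\ell})/3$ and sum to $L/3+O(1)$. The floor sum $\sum_{k\geqslant 1}\lfloor n_L/2^{k+1}\rfloor$ is a shifted form of $\sum_{k\geqslant 1}\lfloor n_L/2^k\rfloor=n_L-s_2(n_L)$ (Legendre's identity), contributing $n_L/2+O(L)$ once $\lfloor n_L/2\rfloor=(n_L-1)/2$ is separated off.

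Adding the two sums and subtracting $n_L/2$, the $n_L$-dependent parts cancel to give
\[
\sum_{i=0}^{n_L-1}x_i-\tfrac{n_L}{2}=-\tfrac{L}{3}+O(1),
\]
so $|n_L(\hat\mu_{n_L}-1/2)|=L/3+O(1)$. Combined with $\log n_L=\log((4^{L+1}-1)/3)=2(L+1)\log 2+O(1)$, this yields
\[
\frac{n_L}{\log n_L}\Bigl|\hat\mu_{n_L}-\tfrac{1}{2}\Bigr|\longrightarrow\frac{1}{6\log 2}
\]
as $L\to\infty$. Since $1/(6\log 2)>1/(8\log 2)>c$, the $\limsup$ is at least $1/(6\log 2)$, which strictly exceeds $c$.

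The argument is essentially mechanical once the binary structure of $n_L$ is exploited; the only care required is in the Legendre-identity bookkeeping so that the $n_L$ pieces cancel cleanly, leaving the $\Theta(L)=\Theta(\log n_L)$ departure from $n_L/2$. No substantive conceptual obstacle is anticipated.
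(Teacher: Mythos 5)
Your overall route is the paper's: apply Proposition~\ref{prop:sumx} at $n=n_L$ and exploit the pattern $10101\cdots01$. The difference is that the paper evaluates each floor exactly as $2^{-k-1}n_L-\theta_k$, argues the positive-part terms all vanish, and then needs only the crude bound $\theta_k\geqslant 1/8$ to reach the constant $1/(8\log 2)$, whereas you go after the sharp asymptotics of both pieces. Note that your evaluation of the positive parts (nonzero exactly when bit $k$ of $n_L$ is one, each equal to $(1-4^{-\ell})/3$, total $L/3+O(1)$) directly contradicts the paper's proof, which claims these terms are all zero; only one of these can be right, and a numerical check of the displayed formula in Proposition~\ref{prop:sumx} on a case as small as $n=5$ (where $\sum_{i<5}x_i=13/8$) is the fastest way to settle what that proposition actually delivers before building on it.

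The genuine gap is in your final assembly. Legendre gives $\sum_{k\geqslant1}\lfloor n_L/2^{k+1}\rfloor=n_L-s_2(n_L)-\lfloor n_L/2\rfloor=\tfrac{n_L}{2}-L-\tfrac12$: the quantity you bury in ``$O(L)$'' is exactly $-L$, and it cannot be discarded if you want an $O(1)$-precise conclusion. Adding your positive-part total $L/3+O(1)$ and subtracting $n_L/2$, the two pieces you actually computed give $-2L/3+O(1)$, not the asserted $-L/3+O(1)$; the sentence ``the $n_L$-dependent parts cancel to give $-L/3$'' is not supported by anything preceding it. As it happens, direct computation (e.g.\ via $\sum_{i<n}x_i=n/2-\tfrac12\sum_{k\geqslant1}\Vert n/2^k\Vert$, or from $n_L=5,21,85$, which give $\sum x_i-n_L/2=-7/8,\,-39/32,\,-199/128$) shows the true answer is $-L/3+O(1)$ and the true limit is $1/(6\log 2)$, so your headline numbers are correct --- but they do not follow from your intermediate sums, which signals an inconsistency between the formula you are applying and the asymptotics you extract. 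Since any of the candidate constants $1/(6\log 2)$, $1/(4\log 2)$, $1/(3\log 2)$ exceeds $1/(8\log 2)$, the proposition itself survives once the bookkeeping is made internally consistent, but as written the derivation of the limit $1/(6\log 2)$ is not established.
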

\begin{proof}
We need $K(n_L)=2L-1$ base $2$ digits to represent $n_L$.
We can write $n_L = \sum_{\ell=0}^L2^{2\ell}$.
Then for $1\leqslant k\leqslant 2L-1$,
\begin{align*}
\lfloor 2^{-k-1}n_L\rfloor
&
=\Bigl\lfloor2^{-k-1}\sum_{\ell=0}^L2^{2\ell}\Bigr\rfloor
=2^{-k-1}n_L-\sum_{\ell=0}^{\lfloor (k-1)/2\rfloor}2^{2\ell-k-1}.
\end{align*}
Next let
\begin{align*}
\theta_k\equiv
\sum_{\ell=0}^{\lfloor (k-1)/2\rfloor}2^{2\ell-k-1}
=2^{-k-1}\sum_{\ell=0}^{\lfloor (k-1)/2\rfloor}2^{2\ell}
= 2^{-k-1}\frac{4^{\lfloor (k+1)/2\rfloor}-1}3
\end{align*}
and then
\begin{align*}
(2^{-k}n_L-2\lfloor 2^{-k-1}n_L\rfloor-1)_+
&=
(2^{-k}n_L-2^{-k}n_L+2\theta_k-1)_+
=(2\theta_k-1)_+
\end{align*}
Because $\theta_k<1/3$ we have
$(2\theta_k-1)_+=0$.
Using Proposition~\ref{prop:sumx},
\begin{align*}
\frac1{n_L}\sum_{i=0}^{n_L-1}x_i
&=\frac1{n_L}\sum_{k=1}^{K(n_L)}\lfloor 2^{-k-1}n_L\rfloor
=\frac1{n_L}\sum_{k=1}^{K(n_L)}
2^{-k-1}n_L-\theta_k\\
&=
\frac12-2^{-K(n_L)-1}-\frac1{n_L}\sum_{k=1}^{K(n_L)}\theta_k.
\end{align*}
Now because $\theta_k\geqslant\theta_2=1/8$,
$|\hat\mu_{n_L}-1/2| \geqslant %
K(n_L)/(8n_L)$
and so
\begin{align*}
\frac{n_L}{\log(n_L)}|\hat\mu_{n_L}-1/2|
&\geqslant \frac{K(n_L)}{8\log(n_L)}
> \frac18\frac{2L-1}{(L+1)\log(4)}\to\frac1{4\log(4)},
\end{align*}
completing the proof.  
\end{proof}


We can see why the sample sizes $n_L$ give unusually innaccurate estimates
of the integral of $x$ in the van der Corput sequence.
Those values of $n$ consistently miss getting into the `ones block'
for digit $k$.

Figure~\ref{fig:vdcempiricals} shows some empirical
behavior of the scaled errors for the two integrands
we considered in this section. 
The scaled error there is essentially the number of observations
by which the count of points in $[0,2/3)$ differs from
$2n/3$.  It is $n|\delta_n(2/3)|$ which is the customary
scaling in the discrepancy literature.

\begin{figure}[t]
\centering
\includegraphics[width=.9\hsize]{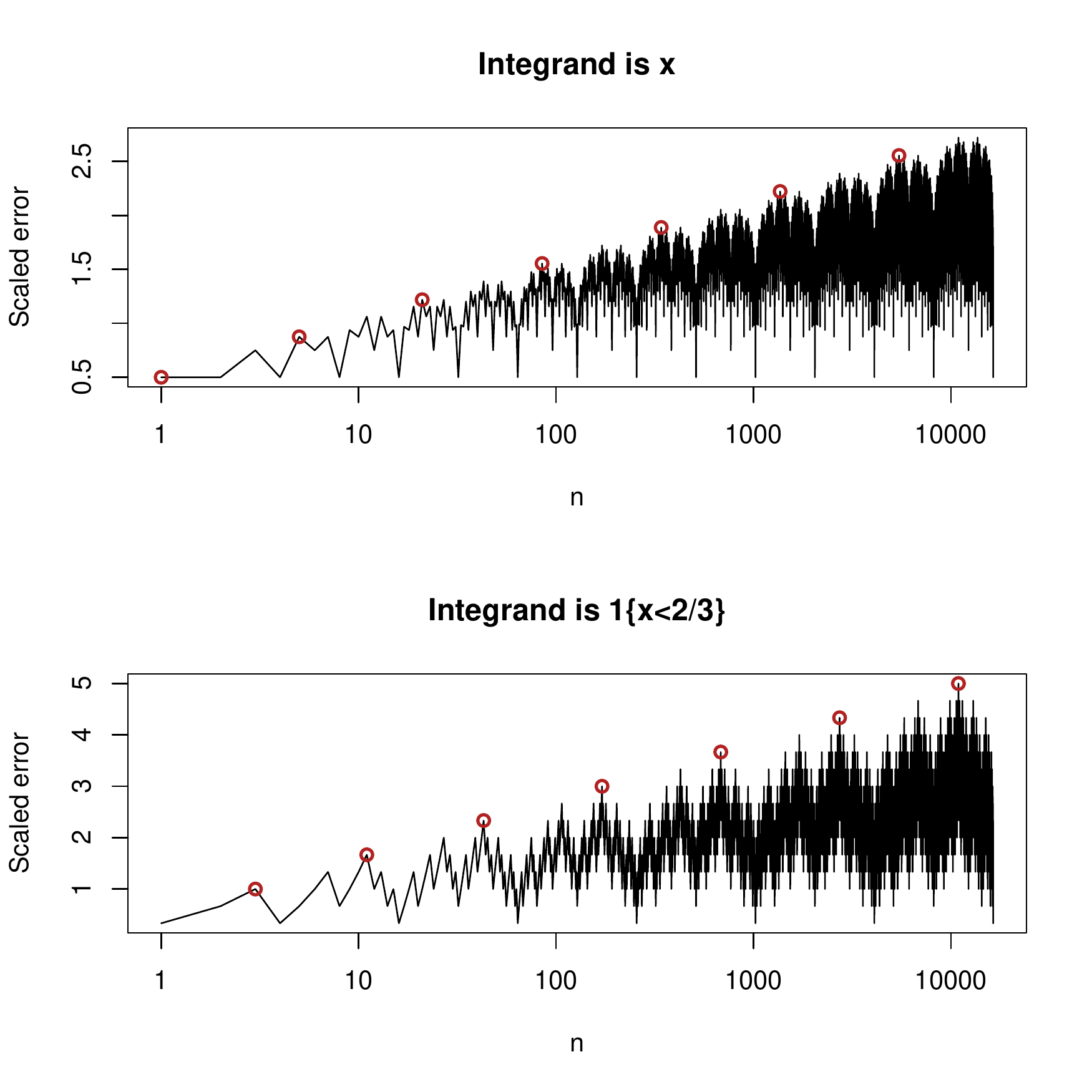}
\caption{\label{fig:vdcempiricals}
Both panels show the scaled error $n|\hat\mu_n-\mu|$ versus $n$
for the first $2^{14}$ points of the van der Corput sequence.
The top panel has the integrand $f(x)=x$ and the values
for $n=n_L$ are indicated with open circles.  The bottom panel has the integrand $f(x) = 1\{x<2/3\}$
and the values $n=\tilde n_L$ are indicated.
}
\end{figure}

The integrands $x$ and $1\{x<2/3\}$ both have
total variation one.  It would be interesting to know
what integrand of total variation one has the largest
value for $\limsup_{n\to\infty} n|\hat\mu_n-\mu|/\log(n)$
in the van der Corput sequence.

For integration, it is advisable to use $n=b^m$ for $m\geqslant0$
in the van der Corput sequence. Then the Koksma-Hlawka inequality
gives us $|\hat\mu_{b^m}-\mu|=O(1/b^m)$ as $m\to\infty$
for any $f$ of bounded variation on $[0,1]$ because
the van der Corput sequence has $D^*_{b^m} = b^{-m}=1/n$.
Any $(t,d)$-sequence for $d=1$ has $D^*_{b^m}=O(1/n)$.
For $d=1$, bounded variation in the sense of Hardy
and Krause reduces to the familiar one dimensional notion
of bounded variation.
As a result a log power $r>0$ can apply to
the limit as $n\to\infty$ through positive
integers but not as $n=b^m$ for $m\to\infty$.

\section{Empirical investigations for $d=2$}
\label{sec:d=2}

This section reports on an empirical search for
an integrand with errors that are $\Omega( \log(n)^r/n)$
for some $r>1$ when using a sequence of points
with $D_n^*=O(\log(n)^2/n)$.  We know from Section~\ref{sec:proofofbound}
that this is attainable for $1<r<3/2$
and not ruled out for $3/2\leqslant r<2$.
We search using some knowledge of the
weaknesses of the van der Corput points for the functions
from Section~\ref{sec:d=1}.

The search must take place with $d\geqslant2$ and so $d=2$
is the natural first place to look for a problematic integrand.
It is clear that the integrand should not be additive because
then integrating it involves a sum of two one dimensional
integration problems where we know that $r>1$ is not needed.
We look at two generalizations of the van der Corput
sequence.  The first is the Halton sequence for $d=2$.
In that sequence, $\bsx_{i1}$ is the van der Corput sequence
in base $2$ and $\bsx_{i2}$ is the van der Corput sequence
in base $3$.
The second sequence is the Sobol' sequence in base $2$.
It has $t=0$ and like the Halton points, the first
component $\bsx_{i1}$ is the van der Corput sequence
in base $2$.  For $n=2^m$, the second component
$\bsx_{i2}$ of the Sobol' sequence is a permutation
of the first $n$ elements of the van der Corput sequence.


We look first at the scaled error $n|\hat\mu_n-\mu|$
for $f(\bsx_i) = (x_{i1}-1/2) (x_{i2}-1/2)$.
This integrand has integral zero and two dimensional Vitali variation of one.
It integrates to zero over each component of $\bsx$ when the other
component is fixed at any value.  It has no nonzero ANOVA component
of dimension smaller than $2$.  Each of the factors is a persistent
fooling function for the van der Corput points.

For the Halton sequence, the scaled error equals $1/4$ when $n=1$
and it remains below $1/4$ for $2\leqslant n\leqslant 2^{20}$.  It repeatedly approaches $1/4$
from below.
Figure~\ref{fig:d2prodempirical} shows the first $2^{19}$ scaled errors
for this product integrand for both Halton and Sobol' points.
For both constructions we see no evidence that the error is
$\Omega(\log(n)^r/n)$ for $r>1$.  In fact, what is quite surprising
there is the apparent $O(1/n)$ error rate, which is then
{\sl better} than what the van der Corput sequence attains for $f(x)=x$.

By plotting only $2^{19}$ points
instead of all $2^{20}$ it becomes possible to see something of
the structure within the triangular peaks for the Sobol' points.
The linear scale for $n$ (versus a logarithmic one) shows
a clear repeating pattern consistent with an $O(1/n)$ error rate.
Whether or not the errors are $O(1/n)$, this integrand is
not a promising one to investigate further in the
search for an integrand needing $r>1$.

\begin{figure}[t]\centering
\includegraphics[width=.9\hsize]{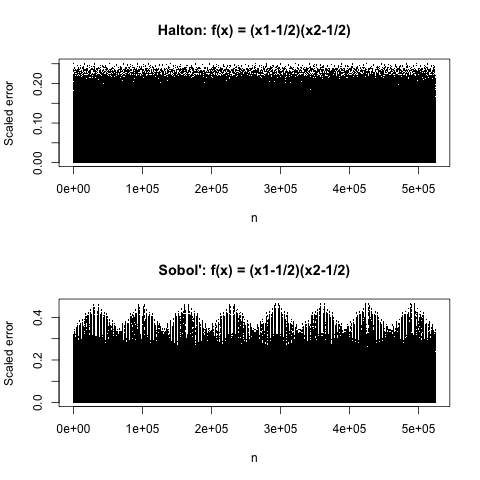}
\caption{\label{fig:d2prodempirical}
The panels show scaled errors $n|\hat\mu_n-\mu|$ versus $n$
for the integrand $f(\bsx)=(x_1-1/2) (x_2-1/2)$ on $[0,1]^2$.
The top panel is for Halton points and the bottom one is
for Sobol' points.
}
\end{figure}

Another challenging integrand for van der Corput
points was $1\{x<2/3\}$.
For the Sobol' points we then take
$f(\bsx)=\prod_{j=1}^2( 1\{x_j<2/3\}-2/3)$
for $d=2$.  Once again we have removed
the additive contribution to the integrand
that we know is integrated at the $\log(n)/n$
rate making it easier to discern the effect
of the bivariate component.
For Halton points, we do not use $2/3$
as that has a terminating expansion in base $3$
that defines the second component of the $\bsx_i$.
We use
$f(\bsx)=(1\{x_1<2/3\}-2/3) (1\{x_2<3/5\}-3/5)$
for Halton points because $3/5$ does not have a terminating
expansion in base $3$ that could make the problem
artificially easy.  Both of these functions have
two dimensional Vitali variation equal to one, just
like $\prod_{j=1}^2(x_j-1/2)$.
Figure~\ref{fig:d2rectempirical} shows the scaled errors
$n|\hat\mu-\mu|/\log(n)$. The logarithmic scaling
in the denominator is there so that
a value of $r>1$ would give an infinite series of new records.
We don't see many such new records for $n\leqslant 2^{22}$
for either of the two test cases. These integrands were designed
to exploit weaknesses in the Sobol' and Halton sequences
and they did not produce the appearance of errors growing
faster than $\log(n)/n$.  It remains possible that errors
grow like $\log(n)^r/n$ for $r>1$ but with the records being set very sparsely.
The situation is quite different from Figure~\ref{fig:vdcempiricals}
in the one dimensional case where we see a steady sequence of
record values with a fractal pattern in the empirical errors.

\begin{figure}[t]\centering
\includegraphics[width=.9\hsize]{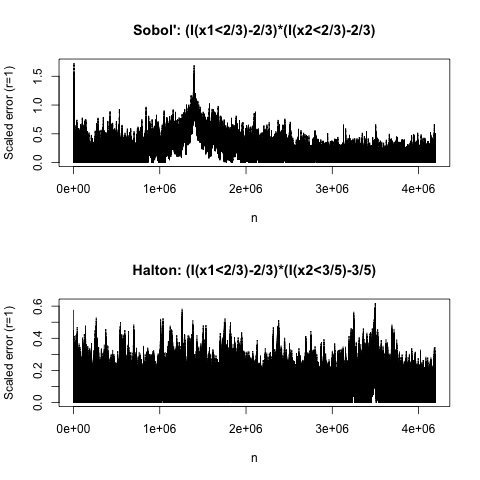}
\caption{\label{fig:d2rectempirical}
The panels show scaled errors $n|\hat\mu_n-\mu|/\log(n)$ versus $n>1$.
The top panel has $f(\bsx)=(1\{x_1<2/3\}-2/3)\times(1\{x_2<2/3\}-2/3)$
for Sobol' points.
The bottom panel has
$f(\bsx)=(1\{x_1<2/3\}-2/3)\times(1\{x_2<3/5\}-3/5)$
for Halton points.
}
\end{figure}

There are some other integrands that could be difficult for
QMC.  A badly approximable irrational number $\theta$
is one where the distance between $n\theta$
and $\Z$ is above $c/n$ for some $c>0$ and all integers $n\geqslant1$.
For instance, $\theta=\sqrt{2}-1$ is badly approximable.
An integrand like $\prod_{j=1}^d (1_{x_j<\theta}-\theta)$
could pose a challenge to QMC methods, but some exploration
with Halton and Sobol' points was inconclusive: there was
no empirical indication that $r>1$ would be required.
Another potentially difficult integrand is $\prod_{j=1}^d (x_j^\theta-1/(1+\theta))$
for $0<\theta<1$. Partial derivatives of this integrand with respect to a subset
of components $x_j$ are unbounded.  That would make them fail the sufficient
conditions for scrambled nets to attain $O(n^{-3/2+\epsilon})$ RMSE
in \cite{smoovar,localanti,yue:mao:1999}.
These integrands did not show a need for $r>1$.

Next, we consider the function $f(\bsx) = 1\{x_1+x_2<1\}$.
This integrand has infinite Vitali variation
over $[0,1]^2$. Therefore it also has infinite Hardy-Krause
variation  and so the Koksma-Hlawka bound degenerates to $+\infty$.
Because $f$ is Riemann integrable we know that $\hat\mu_n\to\mu_n$
if $D_n^*\to0$.
Finding that $r>1$ for this $f$ would not provide
an example of a BVHK function needing that $r>1$.
It remains interesting to see what happens because the case is
not covered by QMC theory without randomization.

Figure~\ref{fig:d2notbvempirical} shows scaled errors
for this integrand, using an exponent of $r=1$  for $\log(n)$.
There is a striking difference between the results for Sobol'
points versus Halton points.  We see a few approximate doublings of the
scaled error for Sobol' points even when using $r=1$.
This does not prove that
we need $r>1$ here; perhaps we saw the last doubling
or perhaps similar jumps for larger $n$ arise at extremely sparse intervals.
It does serve to raise some additional
questions.  For instance, why are Halton points so much more
effective on this integrand, and to which other integrands
of unbounded variation might that apply?
For smooth integrands, Sobol' points commonly perform
much better when $n$ is a power of two than otherwise.
Here, those sample sizes did not bring much better outcomes.

\begin{figure}[t]\centering
\includegraphics[width=.9\hsize]{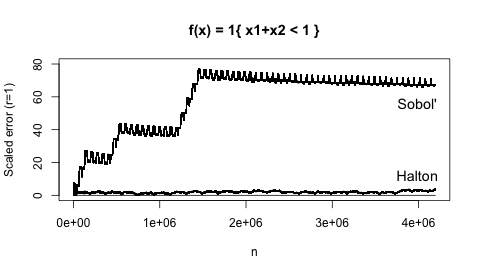}
\caption{\label{fig:d2notbvempirical}
This shows
scaled errors $n|\hat\mu_n-\mu|/\log(n)$ versus $n>1$.
The integrand is $1\{x_1+x_2<1\}$ which has infinite
Vitali variation on $[0,1]^2$. The upper curve is for Sobol' points.
The lower curve is for Halton points. There is a reference
line at scaled error of zero.
}
\end{figure}

\section{Very large $m$ for Sobol' nets}\label{sec:bigm}

If the need for $r>1$ is only evident for very large $n$
then we might fail to detect it by computing $\hat\mu_n$.
If we restrict to sample sizes $n=2^m$  for $m\geqslant0$ then
we can use properties of the generating matrices of Sobol'
sequences to compute the scaled error $n|\hat\mu-\mu|$
for very large $n$ when $f$ is the indicator function
of a set $[\bszero,\bsa)$.
The first $n=2^m$ Sobol' points form a $(t,m,s)$-net in
base $2$ for which the Koksma-Hlawka bound gives
a rate of $O(n^{-1}\log(n)^{d-1})$.  As a result we must
look to $d=3$ or larger for a problem that needs $r>1$
for these values of $n$.
We choose $\bsa = (2/3,2/3,\dots,2/3)$ of length $d$
as $2/3$ is difficult to approximate in base $2$.

We can partition $[0,2/3)^d$ into a countable number
of elementary intervals in base $2$.
The number of points of the digital net
$\bsx_0,\dots,\bsx_{b^m-1}$
that are in $E(\bsk,\bsc)$ equals the number of solutions
$\vec{i}\in \{0,1\}^m$ to a linear equation
$C\vec{i}=\vec{a} \mathrm{\,mod\,} 2$
for a matrix $C\in\{0,1\}^{|\bsk|\times m}$
that takes the first $k_j$ rows from the $j$'th
generating matrix of the digital net, for $j=1,\dots,d$
and some $\vec{a}\in\{0,1\}^m$.
See \cite{pan:owen:2021:tr} for a description and
a discussion of how to find the number of such points.
For our Sobol' points we use  the direction
numbers from Joe and Kuo \cite{joe:kuo:2008}.

To make the computation finite,
we replace $[0,2/3)^d$ by $[0,a_m)^d$
where $a_m=\lfloor 2^m (2/3)\rfloor/2^m$.
For $a\in(0,1)$, let $\mu(a) = a^d$
and $\hat\mu(a) =(1/n)\sum_{i=0}^{n-1}1_{\bsx_i\in[0,a)^d}$.
Then
\begin{align*}
0&\leqslant \mu\Bigl(\frac23\Bigr)-\mu(a_m)
= \Bigl(\frac23-a_m\Bigr)\sum_{j=0}^{d-1}
\Bigl(\frac23\Bigr)^ja_m^{d-j-1}
\leqslant \Bigl(\frac23\Bigr)^{d-1}\frac{d}n.
\end{align*}
The number of the first $n$ Sobol'
points that belong to $[0,2/3)^d\setminus[0,a_m)^d$
is at most $d$. Therefore
\begin{align*}
0&\leqslant
\hat\mu(2/3)-\hat\mu(a_m)\leqslant d/n.
\end{align*}
Now
\begin{align*}
&\quad n(\hat\mu(2/3)-\mu(2/3))
-n(\hat\mu(a_m)-\mu(a_m))\\
&=
n(\hat\mu(2/3)-\hat\mu(a_m)
-n(\mu(2/3)-\mu(a_m))\\
&\in [-(2/3)^{d-1}d,d]\subseteq [-4/3,d]
\end{align*}
so the absolute error in using $n((\hat\mu(a_m)-\mu(a_m))$
instead of $n((\hat\mu(2/3)-\mu(2/3))$
is at most $d$.

Figure~\ref{fig:errorvsmd} shows the results for $d=2,3,4$.
For $d=2$ we see strong linear trend
lines in the scaled error consistent with
needing $r=1$.  For $d=3,4$ the trend is not
obviously linear but it does not have a compelling
and repeating $r>1$ pattern even at $n=2^{50}$ for $d=4$
or $n=2^{100}$ for $d=3$.

\begin{figure}[t]
\centering
\begin{minipage}{0.34\hsize} 
\includegraphics[width=\textwidth]{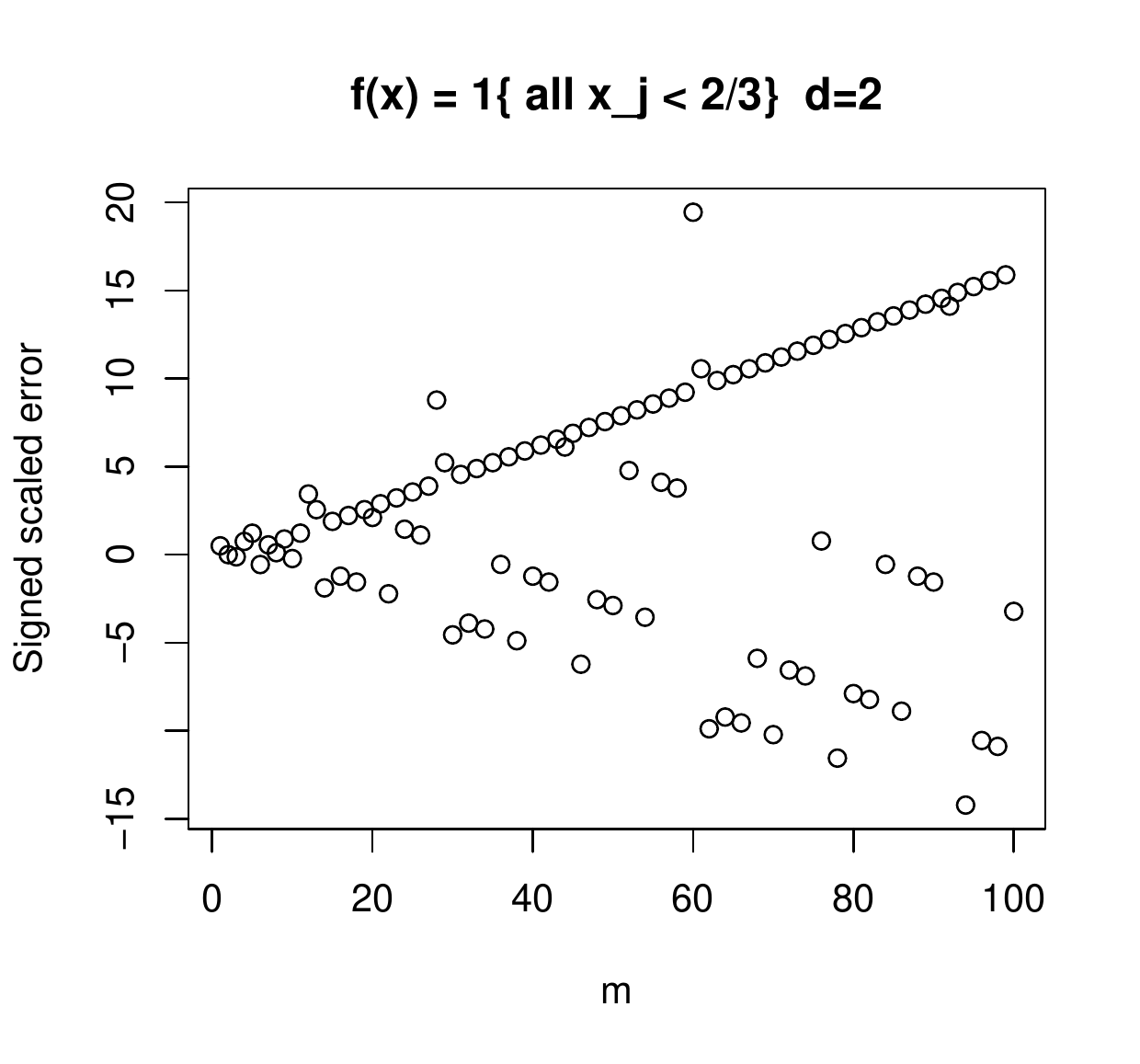}
\end{minipage}
\hspace*{-.35cm} 
\begin{minipage}{0.34\hsize}
\includegraphics[width=\textwidth]{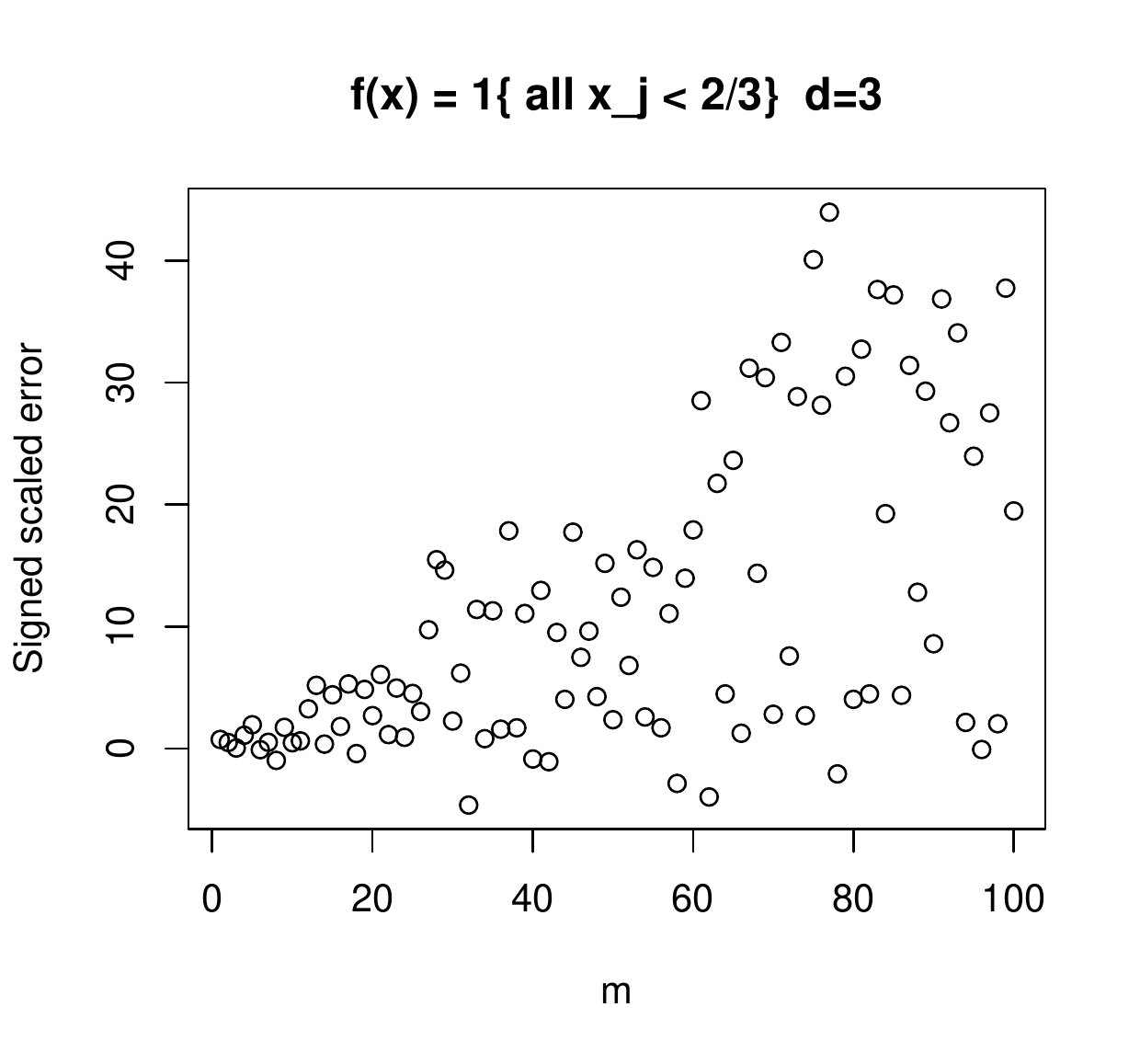}
\end{minipage}
\hspace*{-.35cm} 
\begin{minipage}{0.34\hsize}
\includegraphics[width=\textwidth]{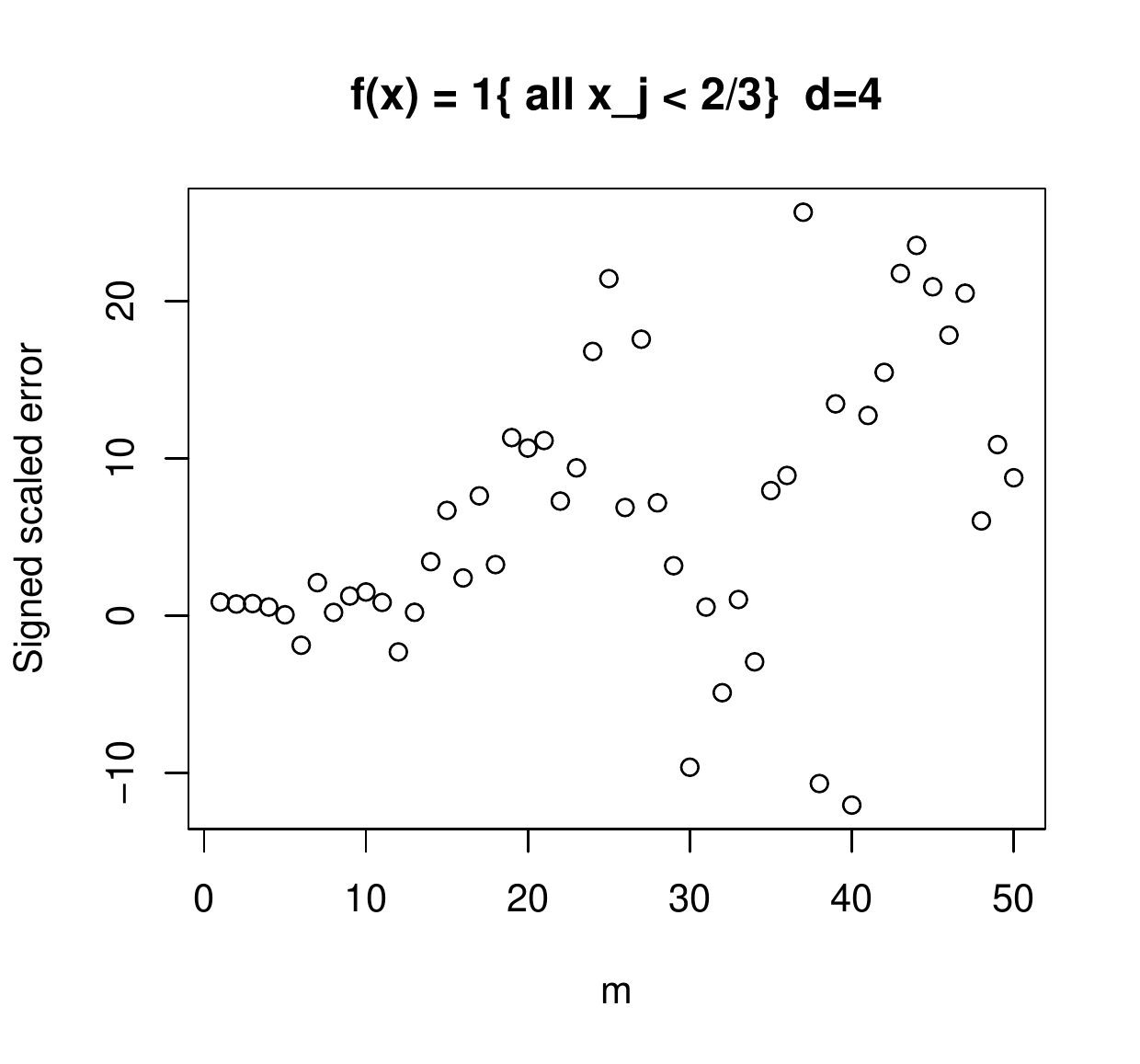}
\end{minipage}
\caption{\label{fig:errorvsmd}
The panels show the signed scaled error
$n(\hat\mu-\mu)$ for a Sobol' sequence
when $f=\prod_{j=1}^d1_{x_j<2/3}$.
The panels have $d=2,3,4$.
The sample sizes go to $2^{100}$ for
$d=2,3$ and to $2^{50}$ for $d=4$.
The computed values differ from the true
ones by at most $d$ for all $m$.
}
\end{figure}

\section{Discussion}\label{sec:discussion}

For any $\epsilon>0$
and any $d\geqslant1$ and any infinite sequence of points $\bsx_i\in[0,1]^d$
and any $c>0$ that there are functions in BVHK$[0,1]^d$ where the QMC error
exceeds $c(\log n)^{r}/n$ infinitely often when $r<(d-1)/2$.
For $d=1$ we can easily construct such functions for $r=1$
using results from discrepancy theory.  We don't know of
any specific examples with $r>1$ and simple multidimensional
generalizations of the functions needing $r=1$ for $d=1$
did not show an apparent need for $r>1$ when $d=2$.
The best candidates we saw for the Sobol' sequence
are $f(\bsx) =\prod_{j=1}^d1\{x_j<2/3\}$ for $d=3$ or $4$
but we have no proof that they require $r>1$.


One surprise is that comparing
Figures~\ref{fig:vdcempiricals} and~\ref{fig:d2prodempirical} we see an error for $f(\bsx)=(x_1-1/2)(x_2-1/2)$ that appears
to be $O(1/n)$ while the one dimensional
function $f(x)=x$ has error $\Omega(\log(n)/n)$ (theoretically
and also empirically over small $n$).
A second surprise is that for a two dimensional function
of unbounded variation we see very different behavior
for Sobol' and Halton points in
Figure~\ref{fig:d2notbvempirical}.  The error for Sobol'
points appears to grow faster than $\log(n)/n$
while that for Halton points is far lower and might
even grow at a different rate.  Neither of these surprises
contradict known theory but it is odd to see the two
dimensional problem apparently easier to solve than
a corresponding one dimensional one and it is also odd
to see Halton points appear to be so much more robust
to unbounded variation than Sobol' points.

So, where could the logs be?  It is possible that they are only
practically important for enormous $n$, or what is almost
the same thing, that they are present with a very tiny implied constant.
It is also possible that the commonly investigated integrands
have error $O(\log(n)/n)$ even for $d\geqslant2$.

\section*{Acknowledgments}
This work was supported by the U.S.\ NSF under
grant IIS-1837931.  Thanks to Fred Hickernell and
Erich Novak for discussions related to this problem.
We are also grateful to Traub, Wasilkowski and Wo\'zniakowski
for ensuring that Trojan's work was not lost.

\bibliographystyle{plain}
\bibliography{qmc}

\end{document}